\newtheorem{theorem}{Theorem}
\newtheorem{lemma}[theorem]{Lemma}
\newtheorem{conjecture}[theorem]{Conjecture}
\newcommand{\D}{\mathcal{D}}
\newcommand{\ex}{\mathrm{ex}}
\newcommand{\exc}{\mathrm{exc}}
\begin{document}
\newcommand{\Addresses}{{
\bigskip
\footnotesize
\medskip

\noindent David~Ellis, \textsc{School of Mathematics, University of Bristol, Bristol, UK.}\par\noindent\nopagebreak\textit{Email address: }\texttt{david.ellis@bristol.ac.uk}

\medskip 

\noindent Maria-Romina~Ivan, \textsc{Department of Pure Mathematics and Mathematical Statistics, Centre for Mathematical Sciences, Wilberforce Road, Cambridge, CB3 0WB, UK.}\par\noindent\nopagebreak\textit{Email address: }\texttt{mri25@dpmms.cam.ac.uk}

\medskip

\noindent Imre~Leader, \textsc{Department of Pure Mathematics and Mathematical Statistics, Centre for Mathematical Sciences, Wilberforce Road, Cambridge, CB3 0WB, UK.}\par\noindent\nopagebreak\textit{Email address: }\texttt{i.leader@dpmms.cam.ac.uk}}}

\pagestyle{fancy}
\fancyhf{}
\fancyhead [LE, RO] {\thepage}
\fancyhead [CE] {DAVID ELLIS, MARIA-ROMINA IVAN AND IMRE LEADER}
\fancyhead [CO] {TUR\'AN DENSITIES FOR DAISIES AND HYPERCUBES}
\renewcommand{\headrulewidth}{0pt}
\renewcommand{\l}{\rule{6em}{1pt}\ }
\title{\Large{\textbf{TUR\'AN DENSITIES FOR DAISIES AND HYPERCUBES}}}
\author{DAVID ELLIS, MARIA-ROMINA IVAN AND IMRE LEADER}
\date{}
\maketitle
\begin{abstract}
An $r$-daisy is an $r$-uniform hypergraph consisting of the six $r$-sets formed by taking the union of an $(r-2)$-set with
each of the 2-sets of a disjoint 4-set. Bollob\'as, Leader and Malvenuto, and also Bukh, conjectured
that the Tur\'an density of the $r$-daisy tends to zero as $r \to \infty$. In this
paper we disprove this conjecture.

Adapting our construction, we are also able to disprove a folklore conjecture about
Tur\'an densities of hypercubes. For fixed $d$ and large $n$, we show that the smallest set of vertices of the
$n$-dimensional hypercube $Q_n$ that intersects every copy of $Q_d$ has asymptotic density strictly
below $1/(d+1)$, for all $d \geq 8$. In fact, we show that this asymptotic density is at most $c^d$, for some constant $c<1$. 
As a consequence, we obtain similar bounds for the edge-Tur\'an densities of hypercubes.
We also answer some related questions of
Johnson and Talbot, and disprove a conjecture made by Bukh and by Griggs and Lu on poset densities.

({\em MSC class:} 05D05.)
\end{abstract}
\section{Introduction}
As usual, if $X$ is a set, we write $X^{(r)}$ for the set of all $r$-element subsets of $X$. For an integer $r \geq 2$, an {\em $r$-daisy} is an $r$-uniform hypergraph of the form
$$\{S \cup X:\ X \in T^{(2)}\},$$
where $S$ is an $(r-2)$-element set and $T$ is a 4-element set disjoint from $S$. We denote this hypergraph by $\mathcal{D}_r$; note that $\mathcal{D}_r$ has $r+2$ vertices and six edges. We call the set $S$ the {\em stem} of the daisy. For any $r$-uniform hypergraph $\mathcal{H}$, we write $\pi(\mathcal{H})$ for the {\em Tur\'an density} of $\mathcal{H}$, i.e.\ $\pi(\mathcal{H})= \lim_{n \to \infty} (\ex(n,\mathcal{H})/{n \choose r})$, where $$\ex(n,\mathcal{H})= \max\{|\mathcal{F}|:\ \mathcal{F} \text{ is an } \mathcal{H}\text{-free, }r\text{-uniform hypergraph on }n\text{ vertices}\}.$$
Bollob\'as, Leader and Malvenuto made the following conjecture in \cite{blm}. (Bukh also made the same conjecture independently -- see \cite{blm}.)
\begin{conjecture}
\label{conj:daisies}
    $\pi(\mathcal{D}_r) \to 0$ as $r \to \infty$.
\end{conjecture}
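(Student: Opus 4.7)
The task is to construct, for arbitrarily large $r$, a $\mathcal{D}_r$-free $r$-uniform hypergraph $\mathcal{F}\subseteq\binom{[n]}{r}$ whose density $|\mathcal{F}|/\binom{n}{r}$ is bounded below by an absolute constant. The starting observation is that $\mathcal{F}$ is $\mathcal{D}_r$-free if and only if, for every $(r-2)$-subset $S \subseteq [n]$, the link graph $L_{\mathcal{F}}(S) := \{\{a,b\}\subseteq [n]\setminus S : S\cup\{a,b\}\in\mathcal{F}\}$ is $K_4$-free; a daisy with stem $S$ and flower $T$ is exactly a $K_4$ in $L_{\mathcal{F}}(S)$ on vertex set $T$. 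Applying Tur\'an's theorem to each link and double-counting over stems yields the easy upper bound $\pi(\mathcal{D}_r) \leq 1/3 + o(1)$, so the target is the matching order-$1$ lower bound.

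My first attempts would be simple algebraic constructions, for instance $\mathcal{F} = \{E : \sum_{v \in E}\ell(v) = 0$ in $G\}$ for a labelling $\ell : [n] \to G$ in a finite abelian group $G$. A short calculation shows that a daisy in such an $\mathcal{F}$ forces all pair-sums $\ell(a)+\ell(b)$ over $\{a,b\}\subseteq T$ to agree, and hence (in any abelian group) forces $T$ to be $\ell$-monochromatic. Choosing $\ell$ so that every label class has size at most three avoids monochromatic flowers, but it also forces $n \leq 3|G|$, so the density is only $\sim 1/|G|$ and tends to $0$ as $n \to \infty$. Analogous pitfalls afflict the obvious colour-based constructions: the condition ``$E \in \mathcal{F}$'' must depend on $E$ only as a set, not on a chosen stem/flower decomposition, and the naive attempts produce links which are cliques on some colour class.

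The cleanest route I see is via the correspondence between $\mathcal{D}_r$ and the $4$-dimensional hypercube: the six edges of an $r$-daisy are precisely the middle layer of a coordinate-aligned copy of $Q_4$ in $Q_n$ whose $r-2$ fixed coordinates are all set to $1$. Consequently, any set $A \subseteq \{0,1\}^n$ that avoids the middle layer of every such $Q_4$ produces, on restriction to the $r$-th layer, a $\mathcal{D}_r$-free hypergraph. The plan is to construct $A$ explicitly so that it has density $\Theta(1)$ in \emph{every} layer of $Q_n$, not merely on average. Standard ``sum $\not\equiv 0 \pmod{d+1}$'' $Q_d$-free sets will not suffice, because they are constant on each individual layer; one needs a construction that spreads mass uniformly across layers while still avoiding the forbidden middle-layers, and this is essentially the same construction that will underpin the hypercube results announced in the abstract.

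The main obstacle, and the place where real work is required, is to make the density lower bound \emph{uniform in $r$}. Naive product or blow-up constructions typically lose density rapidly as $r$ grows, because the fibre-profile of an $r$-set concentrates on an exponentially small fraction of $\binom{n}{r}$. I expect the technical heart of the argument to be a careful amplification (plausibly a tensor- or junta-type construction) that starts from a positive-density $\mathcal{D}_{r_0}$-free example at some small $r_0$ and produces $\mathcal{D}_r$-free hypergraphs for every $r\geq r_0$ with only a bounded multiplicative loss in density per step.
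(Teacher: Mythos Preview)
Your proposal is not a proof: it is a survey of approaches you tried and found wanting, followed by speculation about what the right construction ought to look like. The decisive idea is absent, and it is quite different from anything you describe.

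The paper's construction is purely linear-algebraic. For $n = 2^r - 1$, identify $[n]$ with $\mathbb{F}_2^r \setminus \{0\}$ and let $\mathcal{F}$ be the family of all \emph{bases} of $\mathbb{F}_2^r$. If $S$ is the stem of a putative daisy in $\mathcal{F}$, then $S$ is a linearly independent $(r-2)$-set spanning a codimension-$2$ subspace $W$, so $|\mathbb{F}_2^r/W| = 4$. For all six edges $S\cup\{u_i,u_j\}$ to be bases, the four images $u_1+W,\ldots,u_4+W$ would have to be pairwise distinct and nonzero in this four-element quotient, which is impossible. Hence $\mathcal{F}$ is $\mathcal{D}_r$-free. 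Its density exceeds $\prod_{k=1}^{r}(1-2^{-k}) > \prod_{k\ge 1}(1-2^{-k}) \approx 0.29$, uniformly in $r$. A routine blow-up (partition $[n]$ into $2^r-1$ equal classes and lift $\mathcal{F}$) extends this to arbitrarily large $n$; crucially, this blow-up is used only to pass from one value of $n$ to all large $n$ at \emph{fixed} $r$, not to amplify from some small $r_0$ to large $r$ as you envisage. No tensor or iteration step is needed, because the base construction already works for every $r$.

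Two smaller points. First, you have the hypercube implication backwards: the paper obtains its hypercube bounds \emph{from} the daisy construction (by inserting complements of daisy-free families into selected layers of $Q_n$), not the other way round; looking for a $Q_4$-hitting set with positive density on every layer is just a restatement of the problem, not a step towards solving it. Second, your upper bound $\pi(\mathcal{D}_r)\le 1/3+o(1)$ is off by a factor of two: a $K_4$-free graph has edge density at most $2/3$, so the link argument gives $\pi(\mathcal{D}_r)\le 2/3$. This is incidental to the main gap, but worth correcting.
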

We note that an easy averaging argument, averaging over links of vertices, shows that the sequence $(\pi(\mathcal{D}_r))_{r \geq 2}$ is monotone decreasing, so $\pi(\mathcal{D}_r)$ does indeed tend to some limit as $r \to \infty$.
\newline

More generally, for $s,t \in \mathbb{N}$ with $t \geq s$, we define an {\em $(r,s,t)$-daisy} to be an $r$-uniform hypergraph of the form
$$\{S \cup X:\ X \in T^{(s)}\},$$
where $S$ is an $(r-s)$-element set and $T$ is a $t$-element set disjoint from $S$. We denote this hypergraph by $\mathcal{D}_r(s,t)$. (As before, we call $S$ the {\em stem} of the daisy.) Bollob\'as, Leader and Malvenuto \cite{blm} in fact made the following conjecture, generalising Conjecture \ref{conj:daisies}.
\begin{conjecture}
\label{conj:daisies-gen}
    For any $s,t \in \mathbb{N}$ with $t \geq s$, we have $\pi(\mathcal{D}_r(s,t)) \to 0$ as $r \to \infty$.
\end{conjecture}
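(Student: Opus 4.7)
The plan is to disprove Conjecture \ref{conj:daisies-gen} by reducing essentially all non-trivial cases to the $(s,t) = (2,4)$ case, which is Conjecture \ref{conj:daisies} and whose disproof forms the main content of this paper. Since Conjecture \ref{conj:daisies} is literally the case $(s,t) = (2,4)$ of Conjecture \ref{conj:daisies-gen}, this already suffices to refute the universally quantified statement; the real goal is to identify the full range of $(s,t)$ for which $\pi(\mathcal{D}_r(s,t))$ fails to tend to zero.

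The reduction is a sub-hypergraph embedding. I claim that $\mathcal{D}_r$ is a sub-hypergraph of $\mathcal{D}_r(s,t)$ whenever $t \geq s + 2$. Given the stem $S$ and petal set $T$ of a $\mathcal{D}_r(s,t)$, fix any $U \in T^{(s-2)}$ and any $W \in (T \setminus U)^{(4)}$; then for each $Y \in W^{(2)}$ the set $(S \cup U) \cup Y$ equals $S \cup (U \cup Y)$ with $U \cup Y \in T^{(s)}$, so it is an edge of $\mathcal{D}_r(s,t)$, and the six such sets constitute a copy of $\mathcal{D}_r$ with stem $S \cup U$ and 4-set $W$. Hence every $\mathcal{D}_r$-free hypergraph is automatically $\mathcal{D}_r(s,t)$-free, giving $\pi(\mathcal{D}_r(s,t)) \geq \pi(\mathcal{D}_r)$ for every $r$ and therefore, combined with the bound $\liminf_{r \to \infty} \pi(\mathcal{D}_r) > 0$ proved in this paper, disproving Conjecture \ref{conj:daisies-gen} for every pair $(s,t)$ with $s \geq 2$ and $t \geq s + 2$.

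This leaves the cases $s = 1$ and $t = s + 1$. The case $s = 1$ goes the other way: a routine degree-counting argument (every $(r-1)$-subset is forced to lie in fewer than $t$ edges of a $\mathcal{D}_r(1,t)$-free family) gives $\pi(\mathcal{D}_r(1,t)) = 0$ for every $r \geq 2$, so the conjecture actually holds in this regime. The genuinely troublesome case is the boundary $t = s + 1$ with $s \geq 2$, which is where I expect the main obstacle to lie: the embedding above requires two ``spare'' elements of $T$, and now there is only one, so $\mathcal{D}_r(s, s+1)$ (which has only $s + 1$ edges, all pairwise meeting in an $(r-2)$-set) does not contain $\mathcal{D}_r$ and its Tur\'an density is not controlled from below by $\pi(\mathcal{D}_r)$. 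A natural first attempt would be to modify the construction used for $\mathcal{D}_r(2,4)$ so that the link of every $(r-s)$-stem avoids a complete $s$-uniform hypergraph on $s+1$ vertices rather than $K_4^{(2)}$, but it is not clear a priori whether positive density can be maintained under such a strengthening, and this boundary case may well require genuinely new ideas.
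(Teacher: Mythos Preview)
Your disproof of the conjecture is correct and, at its core, identical to the paper's: both reduce everything to Theorem~\ref{thm:main}, since $\mathcal{D}_r=\mathcal{D}_r(2,4)$ is already a single counterexample to the universally quantified statement. Your sub-hypergraph embedding, showing $\mathcal{D}_r\subset\mathcal{D}_r(s,t)$ whenever $s\geq 2$ and $t\geq s+2$, is a clean observation that the paper does not spell out; it immediately transfers the $\approx 0.29$ lower bound to this whole range. The paper's route to the general $(s,t)$ cases is different and less economical but quantitatively much stronger: rather than embed, it builds new linear-algebraic families (bases of $\mathbb{F}_q^r$ for $\mathcal{D}_r(2,q+2)$, and linearly independent $r$-sets in $\mathbb{F}_q^{m+r}$ combined with Lemma~\ref{lemma:k-wise} for larger $s$) to obtain bounds like $1-1/t-o(1/t)$ and $1-O(2^{-k})$. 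Those sharper constants are essential for the hypercube applications (Theorems~\ref{thm:cube}--\ref{thm:turan}), which your embedding bound of $\approx 0.29$ would not reach.

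Two small corrections to your boundary discussion. First, in $\mathcal{D}_r(s,s+1)$ any two edges meet in an $(r-1)$-set, not an $(r-2)$-set: removing two distinct elements of the $(s+1)$-set $T$ leaves intersections of size $(r-s)+(s-1)=r-1$. Second, and more substantively, your instinct that $t=s+1$ is ``genuinely troublesome'' is off; this case actually \emph{satisfies} the conjecture by an argument just as elementary as your $s=1$ case. Any $s+1$ distinct $r$-subsets of a fixed $(r+1)$-set $A$ are of the form $A\setminus\{v_1\},\ldots,A\setminus\{v_{s+1}\}$, and taking $T=\{v_1,\ldots,v_{s+1}\}$, $S=A\setminus T$ exhibits them as a copy of $\mathcal{D}_r(s,s+1)$. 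Hence a $\mathcal{D}_r(s,s+1)$-free family contains at most $s$ of the $r$-subsets of each $(r+1)$-set, and double counting gives density at most $s/(r+1)\to 0$. (The paper alludes to the $s=2$ instance of this in its closing Note.) So your embedding argument in fact delineates the exact dichotomy: the conjecture fails precisely when $s\geq 2$ and $t\geq s+2$, and holds otherwise.
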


Conjecture \ref{conj:daisies} has the following appealing reformulation in terms of subcubes. As usual, a {\em $d$-dimensional subcube} of $\{0,1\}^n$ is a subset of $\{0,1\}^n$ of the form $\{x \in \{0,1\}^n:\ x_{i} = c_{i} \text{ for all } i \in F\}$, where $F \in \{1,2,\ldots,n\}^{(n-d)}$ and $c_i \in \{0,1\}$ for all $i \in F$. If $n \geq 4$ is even, a {\em middle 4-cube} of $\{0,1\}^n$ is a 4-dimensional subcube of $\{0,1\}^n$ which is contained in the middle five layers of $\{0,1\}^n$ --- meaning, those layers of Hamming weight between $n/2-2$ and $n/2+2$. Note that the intersection of a middle 4-cube with the middle layer of $\{0,1\}^n$ is precisely an $(n/2)$-daisy (identifying $\{0,1\}^n$ with the power-set of $\{1,2,\ldots,n\}$, in the usual way). Similarly, for each even integer $d$ and each even $n \geq d$, a {\em middle $d$-cube} of $\{0,1\}^n$ is a $d$-dimensional subcube of $\{0,1\}^n$ which is contained within the middle $d+1$ layers of $\{0,1\}^n$. As observed in \cite{blm}, Conjecture \ref{conj:daisies} is equivalent to the following.
\begin{conjecture}
\label{conj:rephr}
    If $n$ is even and $\mathcal{F} \subset \{x \in \{0,1\}^n:\ \sum_{i=1}^{n}x_i=n/2\}$ does not contain the middle layer of a middle 4-cube, then $|\mathcal{F}|=o(\binom{n}{n/2})$.
\end{conjecture}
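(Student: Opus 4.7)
The natural starting point is to reformulate the condition in terms of graph links. For each stem $S \in \binom{[n]}{n/2-2}$, define the graph
$$L_S = \left\{\{i,j\} \in \binom{[n]\setminus S}{2}:\ S\cup\{i,j\} \in \mathcal{F}\right\}$$
on the $n/2+2$ vertices of $[n]\setminus S$. A middle 4-cube with stem $S$ and associated 4-set $T$ contributes to $\mathcal{F}$ precisely when $T$ spans a copy of $K_4$ in $L_S$, so the hypothesis of Conjecture \ref{conj:rephr} is exactly that every $L_S$ is $K_4$-free. By a standard double count the average of $|L_S|/\binom{n/2+2}{2}$ over all stems equals $\delta := |\mathcal{F}|/\binom{n}{n/2}$, and Tur\'an's theorem gives $|L_S| \leq (2/3+o(1))\binom{n/2+2}{2}$, so immediately $\delta \leq 2/3+o(1)$.

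\textbf{Induction via supersaturation.} To push $\delta$ to $o(1)$, the natural strategy is to induct on $r=n/2$. Averaging over single-vertex links of an $\mathcal{D}_r$-free family yields $\pi(\mathcal{D}_r) \leq \pi(\mathcal{D}_{r-1})$, which is the monotonicity the paper already notes. I would try to amplify this into a \emph{gain}: if any $(r-1)$-uniform $\mathcal{D}_{r-1}$-free hypergraph of density $\pi(\mathcal{D}_{r-1})+\varepsilon$ necessarily contains super-many daisies, then in our setting a positive fraction of vertex-links would have to sit strictly below the previous Tur\'an threshold, so $\pi(\mathcal{D}_r) \leq \pi(\mathcal{D}_{r-1}) - c\varepsilon$ for some absolute $c>0$. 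Iterating such a step would force $\pi(\mathcal{D}_r)\to 0$, as conjectured.

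\textbf{Alternative: coupling of links.} Should the supersaturation step be intractable, the second approach is to exploit the strong coupling between the $L_S$: any two stems $S,S'$ differing by swapping one element in and one element out give links that, restricted to the common vertex set $[n]\setminus(S\cup S')$, agree exactly. Combined with a stability form of Tur\'an's theorem, this rigidity might force $\mathcal{F}$ to concentrate on a low-complexity extremal structure (for instance a small union of stars, or an approximate Tur\'an graph on every stem), which one could then analyse by direct shifting/compression to get a quantitative decrease in $\delta$.

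\textbf{Main obstacle.} The chief difficulty, which I expect to be genuine, is that the only local constraint on $\mathcal{F}$ is $K_4$-freeness of each link --- a purely graph-theoretic property with Tur\'an density $2/3$ that admits many non-isomorphic, quasi-random extremal configurations. I see no structural mechanism by which the coupling between stems alone forces the link densities to decay with $r$, and a sufficiently clever family of $K_4$-free link-graphs, arranged coherently across stems via a random or algebraic construction, could plausibly produce a daisy-free family of positive middle-layer density. Given this, I would take seriously the possibility that the conjecture is false and, in parallel with the proof attempts above, look for such a construction --- which is indeed the direction the paper pursues.
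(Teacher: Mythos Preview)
The statement you are trying to prove is a \emph{conjecture} that the paper \emph{disproves}. Conjecture~\ref{conj:rephr} is stated as equivalent to Conjecture~\ref{conj:daisies}, and Theorem~\ref{thm:main} shows $\lim_{r\to\infty}\pi(\mathcal{D}_r)\geq \prod_{k\geq 1}(1-2^{-k})\approx 0.29>0$, so there is no proof in the paper to compare against. Any attempt to prove Conjecture~\ref{conj:rephr} must fail.

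Concretely, your supersaturation scheme cannot work: you posit a uniform gain $\pi(\mathcal{D}_r)\leq \pi(\mathcal{D}_{r-1})-c\varepsilon$, but the paper's construction shows the decreasing sequence $(\pi(\mathcal{D}_r))_r$ is bounded below by a positive constant, so no such uniform gain exists. Your ``coupling of links'' idea likewise cannot force $\delta\to 0$, because the very obstacle you identify --- coherent $K_4$-free link graphs arranged across stems --- is realised. The paper identifies $[n]$ (for $n=2^r-1$) with $\mathbb{F}_2^r\setminus\{0\}$ and takes $\mathcal{F}$ to be all bases of $\mathbb{F}_2^r$; the stem of any would-be daisy spans an $(r-2)$-dimensional subspace $W$, and the four petal vectors cannot have pairwise distinct nonzero images in the $4$-element quotient $\mathbb{F}_2^r/W$, so some $r$-set is dependent. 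A blow-up then gives positive density for all $n$. Your closing paragraph is thus the correct instinct: the algebraic construction you anticipate is exactly what the paper supplies, and the preceding proof sketches should be abandoned rather than pursued.
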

Or, to rephrase in the language of Boolean functions, Conjecture \ref{conj:rephr} says that for any $\delta>0$, a Boolean function on the middle layer of $\{0,1\}^n$ with expectation at least $\delta$ must be identically 1 on (the middle layer of) some middle 4-cube, provided $n$ is sufficiently large depending on $\delta$. Similarly, Conjecture \ref{conj:daisies-gen} is equivalent to the statement that for any $\delta>0$ and even $d \geq 4$, a Boolean function on the middle layer of $\{0,1\}^n$ with expectation at least $\delta$ must be constant on (the middle layer of) some middle $d$-cube, provided $n$ is sufficiently large depending on $\delta$ and $d$. Unsurprisingly, these conjectures have generated much interest in the field of `analysis of Boolean functions', since they say, very roughly, that there are no analogues of parity-type functions on $\{0,1\}^n$ for Boolean functions defined on the middle layer alone.
\vspace{0.5cm}

Rather unexpectedly, Conjecture \ref{conj:daisies} is false. Our first aim in this paper is to disprove it, as follows.
\begin{theorem}
\label{thm:main} We have
    $$\lim_{r \to \infty}\pi(\mathcal{D}_r) \geq \prod_{k=1}^{\infty}(1-2^{-k}) \approx 0.29.$$
\end{theorem}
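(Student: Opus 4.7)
The plan is to disprove Conjecture~\ref{conj:daisies} by exhibiting, for each $r$ and each large $n$, a $\mathcal{D}_r$-free $r$-graph on $n$ vertices of edge density at least $\prod_{k=1}^{r}(1-2^{-k})$; this gives $\pi(\mathcal{D}_r)\geq \prod_{k=1}^{r}(1-2^{-k})$, and letting $r\to\infty$ yields the theorem. The form of the target bound is the key clue: $\prod_{k=1}^{r}(1-2^{-k})$ is exactly the probability that $r$ uniformly random vectors in $\mathbb{F}_2^r$ form a basis. So I would fix a function $f:[n]\to\mathbb{F}_2^r$ and declare an $r$-set $R$ to be an edge of my hypergraph precisely when $\{f(v):v\in R\}$ is a basis of $\mathbb{F}_2^r$.

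The central step is to show that this hypergraph is $\mathcal{D}_r$-free for \emph{every} choice of $f$. Suppose, for contradiction, that some stem $S$ of size $r-2$ and disjoint $4$-set $T=\{a,b,c,d\}$ yield a daisy. For any $S\cup X$ to be an edge, $f(S)$ itself must be linearly independent, so $W:=\mathrm{span}(f(S))$ has codimension $2$ in $\mathbb{F}_2^r$. One readily checks that $S\cup\{x,y\}$ is an edge if and only if $f(x)$ and $f(y)$ reduce to two distinct nonzero elements of the quotient $\mathbb{F}_2^r/W\cong\mathbb{F}_2^2$. Having all six edges of the daisy would therefore require the four images $[f(a)],[f(b)],[f(c)],[f(d)]$ to be pairwise distinct and all nonzero in a space with only three nonzero elements; this is impossible.

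For the density bound, I would choose $f$ uniformly at random. For each $r$-set $R$ the vectors $\{f(v):v\in R\}$ are then i.i.d.\ uniform in $\mathbb{F}_2^r$, so $R$ becomes an edge with probability exactly $\prod_{k=1}^{r}(1-2^{-k})$; by linearity of expectation some choice of $f$ gives at least $\binom{n}{r}\prod_{k=1}^{r}(1-2^{-k})$ edges. The only real obstacle here is conceptual --- finding the construction at all, since the conjecture was widely believed to be true. Once one notices that the six independence conditions of a daisy collapse, after quotienting by the span of the stem, to the manifestly impossible pigeonhole statement of placing four pairwise distinct nonzero points in a $2$-dimensional $\mathbb{F}_2$-space, the verification essentially writes itself.
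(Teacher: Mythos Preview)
Your proof is correct and follows essentially the same approach as the paper: both take the $r$-sets whose images form a basis of $\mathbb{F}_2^r$, and both verify $\mathcal{D}_r$-freeness via the identical quotient/pigeonhole argument in $\mathbb{F}_2^r/W\cong\mathbb{F}_2^2$. The only difference is cosmetic --- the paper first treats $n=2^r-1$ via the explicit identification $[n]=\mathbb{F}_2^r\setminus\{0\}$ and then blows up to general $n$, whereas your random colouring $f:[n]\to\mathbb{F}_2^r$ handles all $n$ in one stroke; this is a mild streamlining, not a different idea.
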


Previously, the best known lower bound on $\pi(\mathcal{D}_r)$ (for general $r$) was $\Omega(1/r)$, due to Ellis and King \cite{ek}. 
\vspace{0.5cm}

For $(r,2,t)$-daisies, we obtain the following lower bound.
\begin{theorem}  
For each $t\geq4$ we have 
$$\lim_{r \to \infty}\pi(\mathcal{D}_r(2,t)) \geq 1-1/t-o(1/t).$$
\label{thm:larger}
\end{theorem}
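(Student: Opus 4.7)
The plan is to adapt the random linear-algebraic construction from the proof of Theorem \ref{thm:main} to a larger field. Given $t \geq 4$, I let $q$ be the largest prime power with $q \leq t-2$; by standard prime-gap estimates, $q = t - o(t)$ as $t \to \infty$, so $1/q = 1/t + o(1/t)$.

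I take $\phi: [n] \to \mathbb{F}_q^r$ with components chosen independently and uniformly at random, and set
\[
\mathcal{H} = \{A \in [n]^{(r)} : \phi(A) \text{ is linearly independent in } \mathbb{F}_q^r\}.
\]
For any fixed $r$-set $A$, the image $\phi(A)$ is a tuple of $r$ i.i.d.\ uniform vectors in $\mathbb{F}_q^r$, and is linearly independent with probability $\prod_{k=1}^{r}(1-q^{-k})$. By linearity of expectation, some realisation of $\phi$ yields $|\mathcal{H}|/\binom{n}{r} \geq \prod_{k=1}^{r}(1-q^{-k})$; letting $r \to \infty$ this tends to $\prod_{k=1}^{\infty}(1-q^{-k})$, and a brief Euler-product expansion gives $\prod_k (1 - q^{-k}) = 1 - 1/q + O(1/q^2)$, hence at least $1 - 1/t - o(1/t)$ for the chosen $q$.

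To verify that $\mathcal{H}$ is $\mathcal{D}_r(2,t)$-free, suppose for a contradiction that $\mathcal{H}$ contains a daisy with stem $S$ (of size $r-2$) and vertex set $T$ (of size $t$) disjoint from $S$. Then for each pair $\{x,y\} \in T^{(2)}$, $\phi(S \cup \{x,y\})$ is linearly independent, so $\phi(S)$ is itself linearly independent and every $\phi(x)$ (for $x \in T$) lies outside the $(r-2)$-dimensional subspace $U := \mathrm{span}(\phi(S))$. Moreover, for each pair $\{x,y\} \in T^{(2)}$, the vectors $\phi(x)$ and $\phi(y)$ are linearly independent modulo $U$; so in the quotient $\mathbb{F}_q^r / U \cong \mathbb{F}_q^2$, their images $\bar{\phi}(x), \bar{\phi}(y)$ span distinct $1$-dimensional subspaces. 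Hence the $t$ images $\bar{\phi}(x)$ for $x \in T$ lie on $t$ distinct lines through the origin of $\mathbb{F}_q^2$. But $\mathbb{F}_q^2$ has only $q+1$ such lines, forcing $t \leq q+1 \leq t-1$, a contradiction.

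The main obstacle is the projective counting step at the end: requiring pairwise linear independence inside the $2$-dimensional quotient restricts the $\bar\phi(x)$ to points of $\mathbb{P}^1(\mathbb{F}_q)$, of which there are only $q+1$, and this forces the cap $q \leq t-2$. This cap is precisely what prevents us from reaching density $1 - 1/t$ exactly; the $o(1/t)$ loss combines the small gap $1/q - 1/t$ (arising because $q < t$) with the lower-order terms in the Euler product $\prod_k (1 - q^{-k})$.
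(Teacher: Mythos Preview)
Your proof is correct and follows essentially the same approach as the paper: the same $\mathbb{F}_q$-linear-algebra construction, and the identical projective-counting argument in the two-dimensional quotient to rule out a $\mathcal{D}_r(2,t)$. The only difference is cosmetic: where the paper identifies $[q^r-1]$ with $\mathbb{F}_q^r\setminus\{0\}$ and then passes to general $n$ by an explicit blow-up, you use a uniformly random map $\phi:[n]\to\mathbb{F}_q^r$ and a first-moment argument, which is a standard and slightly cleaner substitute for the blow-up step.
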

%

The bound in this theorem is clearly best-possible, up to the error term. Indeed, by averaging, if $n \geq r+t-2$ then for any $r$-uniform hypergraph $\mathcal{F}$ on the vertex-set $[n]=\{1,2,\ldots,n\}$ with density more than $1-1/(t-1)+1/(n-r+1)$, there exists an $(r-2)$-set $S$ such that the link graph $$\{\{i,j\} \in ([n]\setminus S)^{(2)}:\ S \cup \{i,j\} \in \mathcal{F}\}$$ has density at least $1-1/(t-1)+1/(n-r+1)$. So by Tur\'an's theorem this graph must contain a $K_t$, and so $\mathcal{F}$ must contain a copy of $\D_r(2,t)$. This shows that $\pi(\D_r(2,t)) \leq 1-1/(t-1)$ for all $r \geq 2$ and all $t \geq 4$.\\

For $(r,s,t)$-daisies with larger $s$, we obtain even stronger bounds, although these are probably not asymptotically sharp. It is these bounds that will be key to proving our applications for Tur\'an densities of hypercubes. The precise result we will need is the following. 

\begin{theorem}
\label{theorem:special}
    Let $k$ be even. Then we have    
    $$\lim_{r \to \infty} \pi(\mathcal{D}_{r}(k,8k+1)) = 1-O(2^{-k}).$$


\end{theorem}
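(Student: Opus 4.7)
The plan is to exhibit, for each sufficiently large $r$, an explicit $r$-uniform hypergraph on $[n]$ that is $\D_r(k,8k+1)$-free and has density at least $1-2^{-k}$, using a random linear-algebraic construction in the same spirit as the one that establishes Theorem \ref{thm:main}. Fix an even $k \ge 2$ and $r \ge k$, set $m = r+k$, and for each $i \in [n]$ assign an independent, uniformly random label $\ell(i) \in \mathbb{F}_2^m$. Define
$$\F = \left\{A \in \binom{[n]}{r}\ :\ \text{the vectors } (\ell(i))_{i \in A} \text{ are linearly independent in } \mathbb{F}_2^m \right\}.$$
A direct computation gives $\Pr[A \in \F] = \prod_{j=1}^{r}(1 - 2^{-j-k}) \ge 1 - 2^{-k}$, so for some choice of labels one obtains $|\F| \ge (1 - 2^{-k} - o_n(1))\binom{n}{r}$ as $n \to \infty$.

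The daisy-freeness is deterministic and holds for \emph{every} choice of labels. Suppose, for a contradiction, that there is an $(r-k)$-stem $S$ and a disjoint $(8k+1)$-set $T$ with $S \cup X \in \F$ for every $X \in T^{(k)}$. Then $\ell(S)$ is linearly independent, spanning a subspace $V \subseteq \mathbb{F}_2^m$ of dimension $r-k$, and the $8k+1$ projected labels $\ell(t) + V$ in the $2k$-dimensional quotient $\mathbb{F}_2^m/V$ must have the property that every $k$-element subset is linearly independent.

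The heart of the argument is therefore the following purely coding-theoretic claim: no $8k+1$ vectors in $\mathbb{F}_2^{2k}$ can have every $k$-element subset linearly independent. Such a configuration would form the parity-check matrix of a binary linear code of length $8k+1$, dimension $6k+1$ and minimum distance at least $k+1$; the Hamming (sphere-packing) bound then demands $\sum_{i=0}^{k/2} \binom{8k+1}{i} \le 2^{2k}$. Since $\binom{8k+1}{k/2}$ alone is of order $2^{8k H(1/16)}$ with binary entropy $H(1/16) > 1/4$, this is violated for all sufficiently large $k$; the finitely many remaining small even $k$ can be checked directly (for example, when $k=2$ the $17$ vectors would have to be pairwise distinct and nonzero in $\mathbb{F}_2^4$, but there are only $15$ such vectors). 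This is the main thing to verify; once granted, combining with the density estimate gives $\lim_{r \to \infty} \pi(\D_r(k,8k+1)) \ge 1 - 2^{-k}$, which is the desired $1 - O(2^{-k})$ lower bound.
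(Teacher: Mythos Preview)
Your proof is correct and follows essentially the same strategy as the paper: take $\mathcal F$ to be the linearly independent $r$-subsets of $\mathbb{F}_2^{r+k}$, and establish daisy-freeness by bounding the size of a $k$-wise independent family in the $2k$-dimensional quotient. The differences are purely cosmetic --- you reach general $n$ via random labelling rather than the paper's explicit identification followed by a blow-up, and you invoke the Hamming (sphere-packing) bound where the paper uses Earnest's counting lemma; over $\mathbb{F}_2$ both arguments reduce to the same inequality $\binom{8k+1}{k/2}\le 2^{2k}$, and indeed the Hamming bound already fails at every even $k\ge 2$, so your hedge about ``finitely many small $k$'' is unnecessary (and in any case harmless for an $O(\cdot)$ statement).
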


We now discuss applications of the above results to Tur\'an problems in the hypercube. The latter were popularized by Alon, Krech and Szab\'o in \cite{aks}, following earlier results of Kostochka \cite{kost} and of Johnson and Entringer \cite{je}; Johnson and Talbot obtained further results and posed further problems in \cite{jt}. Our terminology follows that of Johnson and Talbot. As usual, the {\em $n$-dimensional hypercube graph} $Q_n$ is the graph with vertex-set $\{0,1\}^n$, where two zero-one vectors are joined by an edge if they differ in just one coordinate. We start with vertex-Tur\'an problems.

For $n \in \mathbb{N}$, we define 
$$\exc(n,Q_d)= \max\{|\mathcal{F}|:\ \mathcal{F} \subset \{0,1\}^n,\ \mathcal{F} \text{ is }Q_d\text{-free}\},$$
i.e.\ $\exc(n,Q_d)$ is the maximum possible size of a set of vertices in $Q_n$ that does not contain the vertex-set of a $d$-dimensional subcube. We define the {\em vertex-Tur\'an density of $Q_d$} to be
$$\lambda(Q_d) = \lim_{n \to \infty}\frac{\exc(n,Q_d)}{2^n}.$$
(An easy averaging argument shows that the above sequence of quotients is monotone decreasing, so the above limit exists.)

A by now well-known problem in extremal combinatorics is to determine $\lambda(Q_d)$ for each integer $d$. Kostochka \cite{kost}, and independently Johnson and Entringer \cite{je}, showed that $\lambda(Q_2)=2/3$, and in fact both sets of authors determined $\exc(n,Q_2)$ exactly for all $n \in \mathbb{N}$. For each 
$d \geq 3$, however, the value of $\lambda(Q_d)$ is unknown. 

It is in fact more convenient to consider the (equivalent) `complementary' problem: what is the minimal size $g(n,d)$ of a subset of $\{0,1\}^n$ that intersects the vertex-set of every $d$-dimensional subcube? Writing
$$\gamma_d = \lim_{n \to \infty} g(n,d)/2^n,$$ we have $\gamma_d = 1-\lambda(Q_d)$. The best-known upper and lower bounds on $\gamma_d$ for general $d$ are
\begin{equation}\label{eq:aks-bound} \frac{\log_2(d+2)}{2^{d+2}} \leq \gamma_d \leq \frac{1}{d+1},\end{equation}
and are due to Alon, Krech and Szab\'o \cite{aks}. The upper bound comes from taking $\mathcal{F} \subset \{0,1\}^n$ to consist of every $(d+1)^{\text th}$ layer of $\{0,1\}^n$; clearly, this set intersects the vertex-set of every $d$-dimensional subcube, and has density tending to $1/(d+1)$ as $n \to \infty$. In the case $d=2$, the result of Kostochka and Johnson-Entringer says that this construction is best-possible. The lower bound comes from observing that $g(d+2,d)\geq \log_2(d+2)$ and then partitioning $\{0,1\}^n$ into copies of $\{0,1\}^{d+2}$. (To see that $g(d+2,d) \geq \log_2(d+2)$, note that, if $\mathcal{F} \subset \{0,1\}^{d+2}$ intersects the vertex-set of every $d$-dimensional subcube, then it must separate the pairs of $[d+2]$, meaning that for any $1 \leq i < j \leq d+2$, there exists $x \in \mathcal{F}$ such that $x_i\neq x_j$; it is well-known, and easy to see, that if $\mathcal{F} \subset \{0,1\}^n$ separates the pairs of $[n]$ then $|\mathcal{F}| \geq \log_2 n$.) 

It is a folklore conjecture (see \cite{blm}) that $\gamma_d = 1/(d+1)$ for every integer $d \geq 2$, i.e.\ that the upper bound above is best-possible. In other words, to intersect the vertex-set of every $d$-dimensional subcube in $\{0,1\}^n$, one cannot asymptotically do better than to take every $(d+1)^{\text{th}}$ layer. The following result disproves this conjecture, showing that in fact $\gamma_d$ is exponentially small in $d$.

\begin{theorem}
\label{thm:cube} We have
    $$\gamma_d \leq c^d$$
    for all $d \in \mathbb{N}$, where $c <1$ is a constant. In fact, $\gamma_d < \frac{1}{d+1}$ for all $d \geq 8$.
\end{theorem}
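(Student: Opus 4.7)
The plan is to lift the daisy density bound of Theorem~\ref{theorem:special} to a small hitting set for $d$-dimensional subcubes, layer by layer. Identify $\{0,1\}^n$ with $\pn$, fix $d$ large, and let $k$ be the largest even integer with $8k+1\leq d$, so $k=\Theta(d)$. For each layer $m\in\{0,1,\ldots,n\}$ one applies Theorem~\ref{theorem:special} to produce (for $m$ and $n$ sufficiently large) a set $\G_m\subseteq[n]^{(m)}$ of density at most $C\cdot 2^{-k}+o(1)$ that meets every $(m,k,8k+1)$-daisy: $\G_m$ is simply the complement in $[n]^{(m)}$ of a $\D_m(k,8k+1)$-free hypergraph of near-extremal size. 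For the bounded range of layers near $m=k$ and $m=n-d+k$ where the asymptotic is not yet in force, take $\G_m=[n]^{(m)}$; these contribute only $o(2^n)$ in total. Set $\G=\bigcup_m \G_m\subseteq\pn$.

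Next I would verify that $\G$ meets every $d$-subcube. Such a subcube has the form $\{A\cup X:X\subseteq T\}$ with $A\cap T=\emptyset$ and $|T|=d$; writing $w=|A|$ and $m=w+k$, the intersection of the subcube with $[n]^{(m)}$ is precisely the $(m,k,d)$-daisy $\{A\cup X:X\in T^{(k)}\}$ with stem $A$. Restricting $X$ to range over any fixed $(8k+1)$-subset $T'\subseteq T$ exhibits an $(m,k,8k+1)$-sub-daisy, which $\G_m$ must meet by construction; hence $\G$ contains a vertex of the subcube. Summing sizes over all layers gives
$$|\G| \leq \sum_m (C\cdot 2^{-k}+o(1))\binom{n}{m} = (C\cdot 2^{-k}+o(1))\cdot 2^n,$$
so letting $n\to\infty$ yields $\gamma_d \leq C\cdot 2^{-k} \leq c^d$ for some absolute constant $c<1$, provided $d$ is large enough.

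The sharper assertion $\gamma_d<\frac{1}{d+1}$ for every $d\geq 8$ does not follow directly from the crude exponential bound above, and this is where the main difficulty lies. The obstacle is the moderate-$d$ regime: the constant $C$ coming out of Theorem~\ref{theorem:special} need not be small enough to make $C\cdot 2^{-k}$ beat $1/(d+1)$ when $d$ is only in the tens. To close the gap I would track the implicit constant in the proof of Theorem~\ref{theorem:special} explicitly, and for those values of $d$ that are still too small combine hitting sets for several choices of $k$ in adjacent layers, exploiting the sharper bounds available at intermediate $m$ (rather than relying on the single asymptotic $O(2^{-k})$ estimate). A direct numerical check then handles the finitely many cases $d\in\{8,9,\ldots,d_0\}$ not covered by the asymptotic argument.
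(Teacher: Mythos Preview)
Your argument for the exponential bound $\gamma_d\le c^d$ is essentially the paper's: pick the largest even $k$ with $8k+1\le d$, place on every layer the complement of a near-extremal $\mathcal{D}_m(k,8k+1)$-free family, and observe that each $d$-subcube meets its layer $m=|A|+k$ in such a daisy. One remark: you do not need the hedging about ``$m$ and $n$ sufficiently large'' or the compensating whole-layer patches. Since $\pi(\mathcal{D}_r(k,8k+1))$ is monotone decreasing in $r$ and $\ex(n,\cdot)/\binom{n}{r}$ is monotone decreasing in $n$, the bound $\ex(n,\mathcal{D}_m(k,8k+1))/\binom{n}{m}\ge 1-O(2^{-k})$ holds for \emph{every} $m\ge k$ and every $n\ge m$, uniformly, with no $o(1)$ term. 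The paper invokes exactly this monotonicity; without it your summation $\sum_m(C\cdot 2^{-k}+o(1))\binom{n}{m}$ is not obviously controlled, since the $o(1)$ is not a priori uniform in $m$.

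The second assertion, $\gamma_d<1/(d+1)$ for all $d\ge 8$, is where your proposal has a genuine gap. Tracking the constant in Theorem~\ref{theorem:special} does not help: for $d=8$ there is no positive even $k$ with $8k+1\le 8$, so your construction does not even exist; for $d=17$ one gets $k=2$ and a hitting-set density around $2^{-3}$, far above $1/18$. The phrases ``combine hitting sets for several choices of $k$'' and ``direct numerical check'' are not a method: $\gamma_d$ is a limit as $n\to\infty$, so any ``check'' requires an explicit construction for each $d$, which you have not supplied. The paper's argument here differs in two essential ways. First, it switches to $(r,2,t)$-daisies with $t\approx d/2$, using the explicit constant $\prod_{k\ge1}(1-q^{-k})$ extracted from the proof of Theorem~\ref{thm:larger} rather than an asymptotic $O(\cdot)$ bound. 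Second, and crucially, it places hitting sets only on layers that are multiples of $\lceil d/2\rceil$, not on every layer: a $d$-subcube has $\lceil d/2\rceil$ consecutive layers each of whose slices contains a $\mathcal{D}_r(2,t)$, so one such layer suffices to catch it. This sparsification contributes a factor $1/\lceil d/2\rceil$, yielding
\[
\gamma_d\le\frac{1}{\lceil d/2\rceil}\Bigl(1-\prod_{k\ge1}(1-q^{-k})\Bigr)
\]
with $q$ the largest prime power at most $\lfloor d/2\rfloor-1$; a short explicit calculation (reducing to $\beta_3\approx0.560>\tfrac12+\tfrac{1}{2d+2}$) then shows this is below $1/(d+1)$ for every $d\ge 8$. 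Neither the sparse-layer idea nor the switch to $s=2$ daisies with explicit constants appears in your outline.
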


A related question of Johnson and Talbot \cite[Question 13]{jt} is as follows: is it true that for any $\delta>0$ and integer $d \geq 2$, there exists $n_0 = n(d,\delta) \in \mathbb{N}$ such that for all $n \geq n_0$, every subset of $\{0,1\}^n$ of density at least $\delta$ must contain at least ${d \choose \lfloor d/2\rfloor}$ points of some $d$-dimensional subcube? It is easy to see that the answer to this question is `yes' for $d=2$ and $d=3$. Indeed, any subset of $\{0,1\}^n$ of density greater than $1/n$ must contain two points at Hamming distance 2, by averaging over Hamming spheres of radius 1, and two such points are clearly contained in a common 2-dimensional subcube. Moreover, any subset of $\{0,1\}^n$ of density greater than $2/n$ must contain three points of the form $x\Delta\{i\},x\Delta\{j\},x\Delta\{k\}$ for some $x \in \{0,1\}^n$ and distinct $i,j,k \in [n]$, again by averaging over Hamming spheres of radius 1, and three such points are clearly contained in a common 3-dimensional subcube.

However, one can use Theorem \ref{thm:main} to answer the Johnson-Talbot question in the negative for all integers $d \geq 4$, in the following quantitative form.

\begin{theorem}
    There exists a subset $\mathcal{F} \subset \{0,1\}^n$ such that every 4-dimensional subcube of $\{0,1\}^n$ contains at most five points of $\mathcal{F}$, with 
    $$\frac{|\mathcal{F}|}{2^n} = \frac{1}{3}\prod_{k=1}^{\infty}(1-2^{-k}) - o(1) \approx 0.097 - o(1).$$

    Moreover, for each $d \geq 5$, there exists a subset $\mathcal{F} \subset \{0,1\}^n$ such that every $d$-dimensional subcube of $\{0,1\}^n$ contains fewer than ${d \choose \lfloor d/2\rfloor}$ points of $\mathcal{F}$, with
    $$\frac{|\mathcal{F}|}{2^n} \geq c/\sqrt{d},$$
    where $c>0$ is an absolute constant.
    \label{thm:turan}
\end{theorem}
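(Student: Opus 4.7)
My strategy for both parts is to apply Theorem \ref{thm:main} together with a simple sub-hypergraph containment: whenever $s \geq 2$, $t \geq 4$ and $t - s \geq 2$, a $\mathcal{D}_r(s, t)$ contains a $\mathcal{D}_r(2, 4)$. Indeed, given a $\mathcal{D}_r(s, t)$ with stem $S$ and pass $T$, pick any $T' \in T^{(4)}$ and any $Y \in (T \setminus T')^{(s-2)}$; the $4$-daisy with stem $S \cup Y$ and pass $T'$ has edges of the form $S \cup (Y \cup Z)$ with $Y \cup Z \in T^{(s)}$, all of which lie inside the original $\mathcal{D}_r(s, t)$. Consequently any $\mathcal{D}_r(2, 4)$-free family is automatically $\mathcal{D}_r(s, t)$-free in this range.

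For the second statement, fix $d \geq 5$ and take $n$ to be a suitable even constant multiple of $d$. Let $\mathcal{F} \subseteq \binom{[n]}{n/2}$ be the Theorem \ref{thm:main} family: $\mathcal{D}_{n/2}(2, 4)$-free and of size $(\prod_{k \geq 1}(1 - 2^{-k}) - o(1))\binom{n}{n/2}$. Any $d$-cube $Q \subset \{0,1\}^n$ meets the middle layer in a single layer of $Q$, of size $\binom{d}{j}$ for some $j$. When $j \notin \{\lfloor d/2 \rfloor, \lceil d/2 \rceil\}$ we already have $\binom{d}{j} < \binom{d}{\lfloor d/2 \rfloor}$, while in the remaining cases the intersection is itself a $\mathcal{D}_{n/2}(j, d)$, which for $d \geq 5$ contains a $\mathcal{D}_{n/2}(2, 4)$ by the containment above and so cannot lie wholly inside $\mathcal{F}$. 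Either way $|Q \cap \mathcal{F}| < \binom{d}{\lfloor d/2 \rfloor}$. The density $|\mathcal{F}|/2^n \sim \prod_k(1 - 2^{-k})\binom{n}{n/2}/2^n \sim c'/\sqrt n$ is at least $c/\sqrt d$ for a suitable absolute constant $c > 0$ once $n$ is taken to be a large-enough constant multiple of $d$.

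For the first statement ($d = 4$, target density $\tfrac{1}{3}\prod_k(1-2^{-k})$), a single layer yields only density $O(1/\sqrt n)$, so I spread the construction across one residue class of layer weights modulo $3$, which supplies the factor $1/3$. For each $w \equiv 0 \pmod 3$ in a window around $n/2$, apply Theorem \ref{thm:main} inside $\binom{[n]}{w}$ to obtain a $\mathcal{D}_w(2, 4)$-free family $\mathcal{F}_w$ of layer-density $\prod_k(1-2^{-k}) - o(1)$, and set $\mathcal{F} = \bigcup_w \mathcal{F}_w$. For any $4$-cube $Q$ with weights $t, t+1, \ldots, t+4$, either two of these weights are $\equiv 0 \pmod 3$---necessarily the $Q$-layers $j \in \{0, 3\}$ or $\{1, 4\}$, contributing at most $\binom{4}{0} + \binom{4}{3} = 5$ or $\binom{4}{1} + \binom{4}{4} = 5$ points---or exactly one is, which must be the middle $j = 2$ layer of $Q$, on which the six points of $Q$ form a $\mathcal{D}_{t+2}(2, 4)$ of which $\mathcal{F}_{t+2}$ contains at most five. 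Either way $|Q \cap \mathcal{F}| \leq 5$, and the density is $(1/3)\prod_k(1-2^{-k}) - o(1) \approx 0.097$.

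The principal obstacle will be securing Theorem \ref{thm:main}'s limiting density inside each relevant layer simultaneously for the first statement: as stated, the theorem gives that density for a fixed $r$ as $n \to \infty$, whereas we need uniform control as $w = r$ ranges over a linear-in-$n$ window around $n/2$. I would address this by combining the monotonicity of the sequence $(\pi(\mathcal{D}_r))_r$ with the observation that layer weights outside a suitable window around $n/2$ carry only an $o(1)$ share of $2^n$ and can safely be discarded.
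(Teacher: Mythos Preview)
Your first part is the paper's argument: it too places a $\mathcal{D}_r$-free family on every layer $r\equiv 0\pmod 3$ and runs the same case split on $r_0\bmod 3$. Your uniformity worry is unnecessary: since $\ex(n,\mathcal D_r)/\binom n r$ decreases in $n$ and $\pi(\mathcal D_r)$ decreases in $r$, one has $\ex(n,\mathcal D_r)/\binom n r\ge \pi(\mathcal D_r)\ge\prod_k(1-2^{-k})$ for \emph{every} $n$ and every $r$, so no window is needed and the paper simply sums over all $r\equiv 0\pmod 3$.

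For the second part there is a real gap. The statement, like the first part with its explicit $o(1)$, is meant for all sufficiently large $n$ with $d$ fixed; this is also what the Johnson--Talbot application requires, since their question concerns arbitrarily large $n$. Your construction lives on the single middle layer, so $|\mathcal F|/2^n=\Theta(1/\sqrt n)$, and the bound $|\mathcal F|/2^n\ge c/\sqrt d$ survives only for the one value $n=\Theta(d)$ you pick; as $n\to\infty$ with $d$ fixed your density collapses to zero. The paper instead inserts a $\mathcal D_r$-free family into \emph{every} layer $r$ divisible by $\lceil C\sqrt d\rceil$, for a large absolute constant $C$, obtaining density $\Theta(1/\sqrt d)$ uniformly in $n$. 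A given $d$-cube then meets several selected layers, but at most one of them lies within $C\sqrt d/2$ of the cube's middle; on that layer daisy-freeness caps the contribution at $\tfrac56\binom d{\lfloor d/2\rfloor}$ (this is exactly the averaging form of your containment observation: a $\mathcal D_r$-free family meets any $\mathcal D_r(s,t)$ in at most a $\tfrac56$-fraction of its edges), while the remaining selected layers, being far from the cube's middle, together contribute fewer than $\tfrac16\binom d{\lfloor d/2\rfloor}$ points once $C$ is large enough.
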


Johnson and Talbot \cite[Question 12]{jt} also asked if for any fixed family $\mathcal{S}$ with $\mathcal{S}\subset \{0,1\}^d$, there exist (for each $n \in \mathbb{N}$) sets $I_n \subset [n]$ such that the family $\mathcal{F}(I_n)=\{x \in \{0,1\}^n:\ \sum_{i=1}^{n}x_i \in I_n\}$ is $\mathcal{S}$-free, and 
$$\lim_{n \to \infty} \frac{|\mathcal{F}(I_n)|}{2^n} = \lambda(\mathcal{S}).$$

\noindent (Here, $\lambda(\mathcal S)$ denotes the asymptotically greatest density of a family $\mathcal F\subset\{0,1\}^n$ that is $\mathcal S$-free, in the sense that $\mathcal F$ does not contain any image of $\mathcal S$ under an isometric embedding of $Q_d$ into $Q_n$.)
In other words, is it true that an $\mathcal{S}$-free family of asymptotically maximum size can be obtained by taking an appropriate union of layers of the hypercube? Theorem \ref{thm:cube} gives a negative answer to this question, even for the special case of $\mathcal{S}=\{0,1\}^d = V(Q_d)$, for any $d \geq 8$: it is easy to see that a union of layers that is $Q_d$-free has asymptotic density at most $1-1/(d+1)$, whereas Theorem \ref{thm:cube} yields $Q_d$-free subsets of $\{0,1\}^n$ of asymptotic density greater than $1-1/(d+1)$ for each $d \geq 8$. 

\vspace{0.5cm}

Edge-Tur\'an problems in the hypercube have been perhaps more widely studied than vertex-Tur\'an problems, though in some ways the latter are more natural. A $\$ 100$ problem of Erd\H{o}s, still unsolved, asks for the maximum possible number of edges of a subgraph of $Q_n$ which contains no subgraph isomorphic to $C_4$. In general, if $H$ is a graph and $n \in \mathbb{N}$, we write $\ex(Q_n,H)$ for the maximum possible number of edges of a subgraph of $Q_n$ that contains no subgraph isomorphic to $H$. Erd\H{o}s in fact raised the problem of finding (or bounding) $\ex(Q_n,C_{2k})$ for all $k \in \mathbb{N}$. Alon, Krech and Szab\'o proposed a different generalisation of Erd\H{o}s' $C_4$-problem: that of determining (or bounding) $\ex(Q_n,Q_d)$ for each integer $d \geq 2$. Again, it is more convenient to consider the (equivalent) complementary problem: what is the minimal size $f(n,d)$ of a subgraph of $Q_n$ whose edge-set intersects the edge-set of every subgraph of $Q_n$ that is isomorphic to $Q_d$? An easy averaging argument shows that $f(n,d)/e(Q_n)$ is increasing in $n$, so the limit
$$\rho_d = \lim_{n \to \infty} f(n,d)/e(Q_n)$$
exists. Of course, a subgraph of $Q_n$ isomorphic to $Q_d$ must be induced by the vertex-set of a $d$-dimensional subcube of $Q_n$, so $\rho_d$ is a very close analogue of $\gamma_d$, the vertex version considered above. The best-known upper and lower bounds on $\rho_d$ for general $d$ are
$$\Omega\left(\frac{\log_2d}{2^{d}}\right) \leq \rho_d \leq O(1/d^2),$$
and are due to Alon, Krech and Szab\'o \cite{aks}. Theorem \ref{thm:cube} immediately yields an exponential upper bound on $\rho_d$, simply by taking the set of all edges of $Q_n$ that are incident to at least one vertex in a family $\mathcal{F} \subset \{0,1\}^n$ that intersects the vertex-set of every $d$-dimensional subcube.
\begin{theorem}
\label{thm:cube-edges} We have
    $$\rho_d \leq c^d$$
    for all $d \in \mathbb{N}$, where $c<1$ is a constant.
\end{theorem}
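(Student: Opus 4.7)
The plan is to derive this as a direct corollary of Theorem \ref{thm:cube} via the standard reduction from vertex covers to edge covers, as sketched in the paragraph immediately preceding the theorem. A vertex hitting set for all $d$-subcubes yields an edge hitting set for all $Q_d$-subgraphs with only a factor-of-$2$ loss in density, which is harmless for an exponential bound.

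Concretely, I would first invoke Theorem \ref{thm:cube} to obtain, for each $n$, a set $\mathcal{F} \subset \{0,1\}^n$ that meets the vertex set of every $d$-dimensional subcube and satisfies $|\mathcal{F}|/2^n = \gamma_d + o(1) \leq c_0^d + o(1)$ for the absolute constant $c_0 < 1$ supplied by Theorem \ref{thm:cube}. I would then take $E^{*}$ to be the set of all edges of $Q_n$ incident to at least one vertex of $\mathcal{F}$. Any subgraph $Q \subseteq Q_n$ isomorphic to $Q_d$ is spanned by a $d$-subcube, which by construction contains some vertex $v \in \mathcal{F}$; the $d$ edges of $Q$ incident to $v$ within $Q$ then all belong to $E^{*}$, so $E^{*}$ meets the edge set of every such copy of $Q_d$.

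The remaining step is a simple count. Since every vertex of $Q_n$ has degree $n$, we have $|E^{*}| \leq n|\mathcal{F}|$, while $e(Q_n) = n\, 2^{n-1}$, so
\[ \frac{|E^{*}|}{e(Q_n)} \;\leq\; \frac{2|\mathcal{F}|}{2^n} \;\leq\; 2 c_0^d + o(1). \]
Letting $n \to \infty$ yields $\rho_d \leq 2 c_0^d$. Choosing any $c$ with $c_0 < c < 1$ gives $2 c_0^d \leq c^d$ for all $d$ beyond some threshold; the finitely many remaining small values of $d$ are absorbed by enlarging $c$ slightly (while keeping $c<1$), using the monotonicity $\rho_{d+1} \leq \rho_d$ and the trivial bound $\rho_d \leq 1$.

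There is no real obstacle here: the argument is essentially a one-line reduction from the vertex-Tur\'an setting to the edge-Tur\'an setting. The only mildly delicate point is the factor-of-$2$ loss, which is intrinsic to this naive reduction but is completely irrelevant once one is content with an exponential bound; tightening the base of the exponential would instead require revisiting the construction behind Theorem \ref{thm:cube} itself.
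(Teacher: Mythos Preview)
Your proposal is correct and is exactly the approach the paper uses: as stated in the paragraph immediately preceding Theorem~\ref{thm:cube-edges}, one simply takes all edges of $Q_n$ incident to a vertex-hitting set $\mathcal{F}$ furnished by Theorem~\ref{thm:cube}, and the factor of~$2$ from the degree count is absorbed into the constant~$c$. Your write-up is in fact more detailed than the paper's one-line justification.
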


Finally, we turn to the conjecture of Bukh \cite{bukh} and of Griggs and Lu (see \cite{griggs}). They conjectured that, for any poset $\mathcal P$, the asymptotically greatest density of a subset of the $n$-dimensional Boolean lattice not containing a copy of $\mathcal P$ as a subposet is always attained by taking some union of layers.

Now, for the poset $\mathcal P$ being the $d$-dimensional Boolean lattice, the best collection of layers to take would be the middle $d$ layers (of the $n$-dimensional lattice). However, instead we may take the middle $d+1$ layers of the $n$-dimensional lattice, and replace the middle layer by a daisy-free family as given by Theorem \ref{thm:main}. For any $d \geq 4$, this set does not contain a copy of $\mathcal P$, and its size is asymptotically strictly greater than that of the middle $d$ layers.

\section{Daisies have positive Tur\'an density}
In this section we prove Theorems \ref{thm:main}, \ref{thm:larger} and \ref{theorem:special}. We start with Theorem \ref{thm:main}. 
The key idea will be to take (a blow-up of) a linear-algebraic construction. The construction itself will be for $n=2^r-1$. The reader will notice that it is because this value is more than quadratic in $r$ that the blow-up then preserves positive density.

\begin{proof}[Proof of Theorem \ref{thm:main}.] Fix $r\geq 2$. We first consider the case when $n=2^r-1$. We identify $[n]$ with $\mathbb{F}_2^r \setminus \{0\}$, the set of all nonzero vectors of length $r$ over the finite field $\mathbb{F}_2$. 

Take $\mathcal{F}$ to consist of all bases of $\mathbb{F}_2^r$ --- that is, all the linearly independent subsets of size $r$. We claim that $\mathcal F$ is $\mathcal D_r$-free. 

To show this, we must prove that any copy of $\mathcal{D}_r$ in $(\mathbb{F}_2^r \setminus \{0\})^{(r)}$ contains a linearly dependent set. Suppose for a contradiction that we have a copy of $\mathcal{D}_r$ in $(\mathbb{F}_2^r \setminus \{0\})^{(r)}$ whose $r$-sets are all linearly independent. Let $S = \{v_1,v_2,\ldots,v_{r-2}\}$ denote the stem of this daisy; then $S$ is certainly a linearly independent set. Let $W$ denote the subspace spanned by $S$, so that $W$ is an $(r-2)$-dimensional subspace of $\mathbb{F}_2^r$. Let $T = \{u_1,u_2,u_3,u_4\}$ denote the other four vertices of the daisy. Since $\dim(\mathbb{F}_2^r/W)=2$, we have $|\mathbb{F}_2^r/W|=4$. Hence, either there exists $i \in [4]$ such that $u_i \in W$ (in which case $\{u_i,u_j,v_1,v_2,\ldots,v_{r-2}\}$ is not linearly independent, for any $j \in [4]\setminus\{i\}$, a contradiction), or else there exist distinct $i,j \in [4]$ such that $u_i+W=u_j+W$ (in which case $\{u_i,u_j,v_1,v_2,\ldots,v_{r-2}\}$ is not linearly independent, again a contradiction). This shows that $\mathcal{F}$ is indeed $\mathcal{D}_r$-free, as claimed. 

The density of $\mathcal F$ is easy to calculate:
\begin{align*} \frac{|\mathcal{F}|}{{n \choose r}} & = \frac{\text{number of ordered bases of } \mathbb{F}_2^r}{n(n-1)(n-2)\ldots(n-r+1)}\\
& = \frac{(2^r-1)(2^r-2)(2^r-4)\cdots (2^r-2^{r-1})}{(2^r-1)(2^r-2)(2^r-3)\cdots(2^r-r)}\\
& > \frac{(2^r-1)(2^r-2)(2^r-4)\cdots (2^r-2^{r-1})}{2^{r^2}}\\
& = \prod_{k=1}^{r}(1-2^{-k})\\
& > \prod_{k=1}^{\infty} (1-2^{-k})\\
& \approx 0.29.
\end{align*}
We conclude that
$$\frac{\ex(2^r-1,\mathcal{D}_r)}{{2^r-1 \choose r}} > \prod_{k=1}^{r} (1-2^{-k}).$$

We now turn to general $n$, for which we will use a `blow-up' of the above construction. 
For $n$ a multiple of $2^{r}-1$, partition $[n]$ into $2^r-1$ classes of equal size, $C_1,\ldots,C_{2^r-1}$ say, and consider the family $\mathcal{G}$ consisting of all $r$-element subsets of $[n]$ of the form $\{x_1,\ldots,x_r\}$, where $x_i \in C_{j_i}$ for all $i \in [r]$, for $\{j_1,j_2,\ldots,j_r\}$ some element of $\mathcal{F}$, the above $\mathcal{D}_r$-free family on $[2^r-1]$. Clearly $\mathcal{G}$ is $\D_r$-free, since $\mathcal{F}$ is. We now estimate the density of $\mathcal{G}$. The probability that a uniformly random $r$-element subset of $[n]$ has at least two points in some $C_i$ is, by a union bound, at most
$$\frac{{r \choose 2}}{2^r-1}.$$
(To see this, note that if the uniformly random $r$-set $\{x_1,\ldots,x_r\}$ is chosen by choosing $x_1,\ldots,x_r$ at random in order without replacement, then for any $1\leq i < j \leq r$ and any $k \leq 2^r-1$, the probability that both $x_i$ and $x_j$ are in $C_k$ is $|C_k|(|C_k|-1)/(n(n-1)) < 1/(2^r-1)^2$.) Therefore,
$$\frac{|\mathcal{G}|}{{n \choose r}} \geq \left(1-\frac{{r \choose 2}}{2^r-1}\right)\frac{|\mathcal{F}|}{{2^r-1 \choose r}}>\left(1-\frac{{r \choose 2}}{2^r-1}\right)\prod_{k=1}^{r} (1-2^{-k}).$$
Hence 
$$\frac{\ex(n,\D_r)}{{n \choose r}} >\left(1-\frac{{r \choose 2}}{2^r-1}\right)\prod_{k=1}^{r} (1-2^{-k}).$$
Since this holds for arbitrarily large multiples $n$ of $2^r-1$, it follows that
$$\pi(\D_r) \geq \left(1-\frac{{r \choose 2}}{2^r-1}\right)\prod_{k=1}^{r} (1-2^{-k}).$$
Taking the limit of the above as $r \to \infty$ yields 
$$\lim_{r \to \infty}\pi(\mathcal{D}_r) \geq \prod_{k=1}^{\infty}(1-2^{-k}) \approx 0.29,$$
as required.
\end{proof}

The proof of Theorem \ref{thm:larger} is very similar. The daisy in Theorem \ref{thm:main} has $t-2=2$, and the above construction will generalise straightforwardly when $t-2$ is a prime power. Standard estimates on primes will then allow us to pass to the general case. 

\begin{proof}[Proof of Theorem \ref{thm:larger}.]
We start with the case when $t-2$ is a prime power, $q$ say. Let $n=q^r-1$ and identify $[n]$ with $\mathbb{F}_q^r \setminus \{0\}$, where $\mathbb{F}_q$ is the field of order $q$. Take $\mathcal{F}$ to consist of all bases of $\mathbb{F}_q^r$. 

We now claim that $\mathcal{F}$ is $\mathcal{D}_r(2,t)$-free. To show this, it suffices to prove that any copy of $\mathcal{D}_r(2,t)$ in $(\mathbb{F}_q^r \setminus \{0\})^{(r)}$ must contain a linearly dependent set. Suppose for a contradiction that we have a copy of $\mathcal{D}_r(2,t)$ in $(\mathbb{F}_q^r \setminus \{0\})^{(r)}$, whose $r$-sets are all linearly independent sets. Let $S = \{v_1,v_2,\ldots,v_{r-2}\}$ denote the stem of this daisy; then $S$ is a linearly independent set. Let $W$ denote the subspace spanned by $S$, so that $W$ is an $(r-2)$-dimensional subspace of $\mathbb{F}_q^r$. Let $T = \{u_1,u_2,\ldots,u_{q+2}\}$ denote the other $t=q+2$ vertices of the daisy. Then $\mathcal{S}=\{u_1+W,u_2+W,\ldots,u_{q+2}+W\}$ is a set of size $q+2$ in the two-dimensional vector space $\mathbb{F}_q^r/W$ (over $\mathbb{F}_q$), so $\mathcal{S}$ contains at least two points on the same line through the origin (noting that the $q+1$ distinct lines through the origin in $\mathbb{F}_q^2$ cover $\mathbb{F}_q^2$), so there exist $1 \leq i < j \leq q+2$ and $\lambda \in \mathbb{F}_q$ such that $u_i+W = \lambda(u_j+W)$. But then $\{u_i,u_j,v_1,v_2,\ldots,v_{r-2}\}$ is a linearly dependent set, a contradiction. This shows that $\mathcal{F}$ is indeed $\mathcal{D}_r(2,t)$-free, as claimed.

As above, we have:
\begin{align*} \frac{|\mathcal{F}|}{{n \choose r}} & = \frac{\text{number of ordered bases of } \mathbb{F}_q^r}{n(n-1)(n-2)\ldots(n-r+1)}\\
& = \frac{(q^r-1)(q^r-q)(q^r-q^2)\cdots (q^r-q^{r-1})}{(q^r-1)(q^r-2)(q^r-3)\cdots(q^r-r)}\\
& > \frac{(q^r-1)(q^r-q)(q^r-q^2)\cdots (q^r-q^{r-1})}{q^{r^2}}\\
& = \prod_{k=1}^{r}(1-q^{-k})\\
& > \prod_{k=1}^{\infty} (1-q^{-k})\\
& = 1-1/q-O(1/q^2).
\end{align*}
Hence
$$\frac{\ex(q^r-1,\mathcal{D}_r(2,t))}{{q^r-1 \choose r}} > \prod_{t=1}^{r} (1-q^{-t}).$$

Exactly the same blow-up construction as in the proof of Theorem \ref{thm:main} yields
$$\frac{\ex(n,\D_r(2,t))}{{n \choose r}} >\left(1-\frac{{r \choose 2}}{q^r-1}\right)\prod_{k=1}^{r} (1-q^{-k})$$
for $n$ an arbitrarily large multiple of $q^r-1$. It follows that
$$\pi(\D_r(2,t)) \geq \left(1-\frac{{r \choose 2}}{q^r-1}\right)\prod_{k=1}^{r} (1-q^{-k}).$$
Taking the limit of the above as $r \to \infty$ yields 
\begin{align}
\label{eq:useful-fact}\lim_{r \to \infty}\pi(\mathcal{D}_r(2,t))& \geq \prod_{k=1}^{\infty}(1-q^{-k})\\
&=1-1/q-O(1/q^2). \nonumber
\end{align}

Finally, for general values of $t$, one may use the fact (from Baker, Harman and Pintz \cite{bhp}) that for any sufficiently large $x>0$ there is a prime between $x-x^{0.525}$ and $x$. By choosing the greatest prime $q$ such that $q\leq t-2$, we obtain from the above that
$$\lim_{r \to \infty}\pi(\mathcal{D}_r(2,t)) \geq 1-1/t-O(1/t^{1.475})$$
for all $t$. 
\end{proof}

We now turn to the proof of Theorem \ref{theorem:special}. In fact, we prove the following more general result, which we include because it 
gives the best bounds that we
can produce for general $(r,s,t)$-daisies. Note that
Theorem \ref{theorem:special} is precisely the case of this when $s=m=k$ and $q=2$.

\begin{theorem}
\label{thm:even-larger}
    Let $q$ be a prime power, $m$ a positive integer and $s$ even, and put $t= 
    \lfloor sq^{\frac{2m}{s}+1}\rfloor+1$. Then
    $$\lim_{r \to \infty} \pi(\mathcal{D}_{r}(s,t)) \geq 1-q^{-(m+1)}-O(q^{-(m+2)}).$$

    
\end{theorem}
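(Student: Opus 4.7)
My plan is to generalize the construction used in Theorem~\ref{thm:larger} by working in the larger space $\mathbb{F}_q^{r+m}$ rather than $\mathbb{F}_q^r$. Specifically, I would fix $r$, set $N=r+m$, identify $[q^N-1]$ with $\mathbb{F}_q^N\setminus\{0\}$, and take $\mathcal{F}$ to be the family of all linearly independent $r$-subsets of $\mathbb{F}_q^N\setminus\{0\}$. The same type of density calculation as in the proof of Theorem~\ref{thm:larger} gives
$$\frac{|\mathcal{F}|}{\binom{q^N-1}{r}} \geq \prod_{k=m+1}^{r+m}(1-q^{-k}) \geq \prod_{k=m+1}^{\infty}(1-q^{-k}) = 1-q^{-(m+1)}-O(q^{-(m+2)}),$$
and the blow-up argument already used in the earlier proofs will then extend this to arbitrary $n$ and give the claimed lower bound on $\pi(\mathcal{D}_r(s,t))$ in the limit $r\to\infty$. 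So the whole task reduces to verifying that $\mathcal{F}$ is $\mathcal{D}_r(s,t)$-free.

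For the daisy-freeness, I would suppose for contradiction that $\mathcal{F}$ contains a $\mathcal{D}_r(s,t)$ with stem $S=\{v_1,\ldots,v_{r-s}\}$ and petals $T=\{u_1,\ldots,u_t\}$. Then $S$ is linearly independent, so $W:=\mathrm{span}(S)$ is $(r-s)$-dimensional and $\mathbb{F}_q^N/W\cong\mathbb{F}_q^{m+s}$. Just as in the proof of Theorem~\ref{thm:larger}, if some petal has vanishing image in this quotient or two petals have equal images, one pads these petals out to an $s$-subset $X\in T^{(s)}$ with $S\cup X$ linearly dependent, a contradiction. So the images $\bar u_1,\ldots,\bar u_t$ are $t$ distinct nonzero vectors in $\mathbb{F}_q^{m+s}$ among which no $s$ are linearly dependent.

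The problem now reduces to the following combinatorial claim, which will be the heart of the proof: any subset $A\subset\mathbb{F}_q^{m+s}\setminus\{0\}$ in which every $s$-element subset is linearly independent satisfies $|A|\leq sq^{2m/s+1}$. To prove this, I would view the elements of $A$ as the columns of a parity-check matrix of an $\mathbb{F}_q$-linear code of length $|A|$ and codimension $m+s$. The hypothesis says every $s$ columns are linearly independent, so the code has minimum distance at least $s+1$; since $s$ is \emph{even}, the sphere-packing (Hamming) bound, applied at radius $s/2$, gives
$$\binom{|A|}{s/2}(q-1)^{s/2}\;\leq\;\sum_{i=0}^{s/2}\binom{|A|}{i}(q-1)^{i}\;\leq\;q^{m+s},$$
and a routine Stirling-based estimate of the binomial coefficient extracts $|A|\leq sq^{2m/s+1}$, contradicting our choice $t=\lfloor sq^{2m/s+1}\rfloor+1$.

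The main obstacle will be keeping the constants in the Hamming-bound estimate clean enough to beat the exact threshold $t=\lfloor sq^{2m/s+1}\rfloor+1$; however, any slack of a constant factor can in principle be absorbed by shifting $N$ by an additive constant, since the target density $1-q^{-(m+1)}-O(q^{-(m+2)})$ is only required up to its error term. Apart from this estimate, the rest of the argument --- the reduction to the combinatorial claim, the blow-up, and the limit $r\to\infty$ --- is a direct adaptation of the proofs of Theorems~\ref{thm:main} and~\ref{thm:larger}.
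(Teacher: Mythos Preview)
Your proposal is correct and follows essentially the same route as the paper: work in $\mathbb{F}_q^{r+m}$, take $\mathcal{F}$ to be the linearly independent $r$-sets, compute the density, reduce daisy-freeness to a bound on $s$-wise independent sets in $\mathbb{F}_q^{m+s}$, and then blow up. The only cosmetic difference is that you phrase the key combinatorial bound as the sphere-packing (Hamming) bound for a code with parity-check columns $A$, whereas the paper states it as a standalone lemma (due to Earnest) proved by the direct observation that the $(q-1)^{s/2}\binom{|A|}{s/2}$ linear combinations $c_1v_1+\cdots+c_{s/2}v_{s/2}$ with nonzero $c_i$ and $\{v_1,\ldots,v_{s/2}\}\in A^{(s/2)}$ are pairwise distinct. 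These two arguments yield the identical inequality $(q-1)^{s/2}\binom{|A|}{s/2}\leq q^{m+s}$.

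One remark: your final hedge about constants and ``shifting $N$'' is unnecessary. You do not need Stirling; the elementary inequality $\binom{|A|}{s/2}\geq (2|A|/s)^{s/2}$ already gives
\[
|A|\leq \frac{s}{2}\cdot\frac{q^{(2m+2s)/s}}{q-1}=\frac{sq}{2(q-1)}\,q^{2m/s+1}\leq sq^{2m/s+1},
\]
which is exactly the threshold $t-1$, so the contradiction with $t=\lfloor sq^{2m/s+1}\rfloor+1$ is immediate with no slack to absorb.
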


[Here and elsewhere in the paper the implied constant in the `O' notation is an absolute constant.]
\vspace{0.5cm}

We first need some preliminaries. For $s,d \in \mathbb{N}$ and $q$ a prime power, we say that a family of vectors $\mathcal{X} \subset \mathbb{F}_q^d$ is {\em $s$-wise independent} if any $s$ of the vectors in $\mathcal{X}$ are linearly independent. We will make
use of the following lemma, due to Earnest \cite{earnest}. We reproduce Earnest's short proof, for the reader's convenience.

\begin{lemma}
\label{lemma:k-wise}
    Let $q$ be a prime power, let $s$ be an even positive integer, and let $\mathcal{X} \subset \mathbb{F}_q^d$ be $s$-wise independent. Then
    $$|\mathcal{X}| \leq sq^{\frac{2d}{s}-1}.$$
\end{lemma}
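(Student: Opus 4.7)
The plan is to exploit $s$-wise independence by counting short linear combinations of vectors in $\mathcal{X}$. Specifically, for each pair $(A,c)$, where $A \in \binom{\mathcal{X}}{s/2}$ and $c: A \to \mathbb{F}_q^* = \mathbb{F}_q \setminus \{0\}$, I will form the linear combination $\phi(A,c) := \sum_{x \in A} c(x)\,x \in \mathbb{F}_q^d$. My goal is to show that $\phi$ is injective, and then to bound the size of its domain against $q^d$.

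For injectivity, suppose $\phi(A,c) = \phi(B,c')$ for two such pairs. Then $\sum_{x \in A} c(x)\,x - \sum_{x \in B} c'(x)\,x = 0$ is a linear combination of vectors from $A \cup B$, and crucially $|A \cup B| \leq s$, so $s$-wise independence forces every coefficient in this combination to be zero. Reading off the coefficients element by element: any $x \in A \setminus B$ has coefficient $c(x) \neq 0$ (contradiction unless $A \setminus B = \emptyset$), any $x \in B \setminus A$ has coefficient $-c'(x) \neq 0$ (contradiction unless $B \setminus A = \emptyset$), and any $x \in A \cap B$ has coefficient $c(x) - c'(x)$, which must vanish. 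Hence $A = B$ and $c = c'$, proving injectivity.

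Since $\phi$ maps into $\mathbb{F}_q^d$, we obtain
$$\binom{|\mathcal{X}|}{s/2}(q-1)^{s/2} \leq q^d.$$
Applying the standard inequality $\binom{n}{k} \geq (n/k)^k$ with $k = s/2$ gives
$$\left(\frac{|\mathcal{X}|}{s/2}\right)^{s/2}(q-1)^{s/2} \leq q^d,$$
and taking $s/2$-th roots and rearranging yields
$$|\mathcal{X}| \leq \frac{s\,q^{2d/s}}{2(q-1)} \leq s\,q^{\frac{2d}{s}-1},$$
where the last inequality uses $q \geq 2$ (so that $2(q-1) \geq q$). This is exactly the claimed bound.

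There is no serious obstacle here; the only conceptual point is the choice of $s/2$ (which relies on $s$ being even, as stated), ensuring that the symmetric difference of two potential preimages has size at most $s$ so that $s$-wise independence can be invoked. Everything else is bookkeeping.
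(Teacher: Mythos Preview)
Your proof is correct and is essentially identical to the paper's: both count the $(q-1)^{s/2}\binom{|\mathcal{X}|}{s/2}$ linear combinations of $s/2$ vectors with nonzero coefficients, use $s$-wise independence to conclude these are distinct in $\mathbb{F}_q^d$, and then apply $\binom{n}{k}\geq (n/k)^k$ together with $2(q-1)\geq q$. The only difference is that you spell out the injectivity argument more explicitly than the paper does.
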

\begin{proof}
    We first show that 
    $$(q-1)^{s/2}{|\mathcal{X}| \choose s/2} \leq q^d.$$
    Indeed, write $l=s/2$. Then the $(q-1)^{l}{|\mathcal{X}| \choose l}$ sums of the form
    $$c_1 v_1 + c_2 v_2 + \ldots + c_l v_l,$$
    for $\{v_1,v_2,\ldots,v_l\}$ an $l$-element subset of $\mathcal{X}$ and $c_1,c_2,\ldots,c_l \in \mathbb{F}_q\setminus \{0\}$, are distinct elements of $\mathbb{F}_q^d$, otherwise $\mathcal{X}$ would contain a linearly dependent subset of size $2l=s$. This establishes the claim.

    If we now use the inequality $${|\mathcal{X}| \choose s/2} \geq \left(\frac{|\mathcal{X}|}{s/2}\right)^{s/2},$$
we obtain
    $$|\mathcal{X}| \leq \frac{sq}{2(q-1)} q^{\frac{2d}{s}-1}\leq sq^{\frac{2d}{s}-1}.$$
\end{proof}

\begin{proof}[Proof of Theorem \ref{thm:even-larger}.]
Let $m \in \mathbb{N}$, let $n=q^{m+r}-1$, identify $[n]$ with $\mathbb{F}_q^{m+r}\setminus \{0\}$, and take $\mathcal{F}$ to consist of all linearly independent sets of $r$ vectors in $\mathbb{F}_q^{m+r}$. We then have 
\begin{align*} \frac{|\mathcal{F}|}{{n \choose r}} & = \frac{\text{number of ordered linearly independent \textit{r}-element subsets of } \mathbb{F}_q^{m+r}}{n(n-1)(n-2)\ldots(n-r+1)}\\
& = \frac{(q^{m+r}-1)(q^{m+r}-q)(q^{m+r}-q^2)\cdots (q^{m+r}-q^{r-1})}{(q^{m+r}-1)(q^{m+r}-2)(q^{m+r}-3)\cdots(q^{m+r}-r)}\\
& > \frac{(q^{m+r}-1)(q^{m+r}-q)(q^{m+r}-q^2)\cdots (q^{m+r}-q^{r-1})}{q^{r(m+r)}}\\
& = \prod_{k=1}^{r}(1-q^{-(m+k)})\\
& > \prod_{k=1}^{\infty} (1-q^{-(m+k)})\\
& = 1-1/q^{m+1}-O(1/q^{m+2}).
\end{align*}

Let $t = \lfloor sq^{\frac{2m}{s}+1}\rfloor+1$. We claim that $\mathcal{F}$ is $\mathcal{D}_r(s,t)$-free. To show this, it suffices to prove that any copy of $\mathcal{D}_r(s,t)$ in $(\mathbb{F}_q^r \setminus \{0\})^{(r)}$ must contain a linearly dependent set. Suppose for a contradiction that we have a copy of $\mathcal{D}_r(s,t)$ in $(\mathbb{F}_q^{m+r} \setminus \{0\})^{(r)}$, whose $r$-sets are all linearly independent. Let $S = \{v_1,v_2,\ldots,v_{r-s}\}$ denote the stem of this daisy; then $S$ is a linearly independent set. Let $W$ denote the subspace spanned by $S$, so that $W$ is an $(r-s)$-dimensional subspace of $\mathbb{F}_q^{m+r}$. Let $T = \{u_1,u_2,\ldots,u_{t}\}$ denote the other $t$ vertices of the daisy. Then $\mathcal{S}=\{u_1+W,u_2+W,\ldots,u_{t}+W\}$ is a set of size $t$ in the $(m+s)$-dimensional vector space $\mathbb{F}_q^{m+r}/W$ (over $\mathbb{F}_q$), so by Lemma \ref{lemma:k-wise}, applied with $d=m+s$, we have
that $\mathcal{S}$ contains a linearly dependent $s$-set, $\{u_{i_1}+W,u_{i_2}+W,\ldots,u_{i_s}+W\}$ say. But then $\{v_1,v_2,\ldots,v_{r-s},u_{i_1},u_{i_2},\ldots,u_{i_s}\}$ is a linearly dependent $r$-set, a contradiction.

The statement on Tur\'an densities now follows by the same blow-up argument as in the proofs of Theorem \ref{thm:main} and Theorem \ref{thm:larger}.
\end{proof}

\section{Proofs of Theorems \ref{thm:cube} and \ref{thm:turan}}
We first prove Theorem \ref{thm:cube}. Our main tool will be Theorem \ref{theorem:special}. We remark that just to disprove the statement that $\gamma_d=1/(d+1)$, it would be sufficient to use Theorem \ref{thm:larger}, but this would only show that $\gamma_d=O(1/d^2)$; to obtain the exponential bound on $\gamma_d$, we need Theorem \ref{theorem:special}. We also remark that the proof below that $\gamma_d<1/(d+1)$ for all $d\geq8$ is included just for precision; the reader who is only interested in large values of $d$ may omit this technically-optimised part of the proof. 

\begin{proof}[Proof of Theorem \ref{thm:cube}.]
    Note that, since $\pi(\D_r(k,8k+1))$ is decreasing in $r$ and $\ex(n,\D_r(k,8k+1))/{n \choose r}$ is decreasing in $n$, it follows from Theorem \ref{theorem:special} that $\ex(n,\D_r(k,8k+1))/{n \choose r} = 1-O(2^{-k})$ for all $r$ and all $n \geq r$.  
    
    In proving the theorem, by adjusting the value of $c$ if necessary, we may clearly assume that $d \geq d_0$, for any absolute constant $d_0 \in \mathbb{N}$.
    
    Given an integer $d \geq 17$, we construct a family $\mathcal{F} \subset \{0,1\}^n$ intersecting the vertex-set of every copy of $Q_d$ as follows. Identify $\{0,1\}^n$ with $\mathcal{P}([n])$ in the usual way. Let $k$ be the maximum even positive integer such that $8k+1 \leq d$. For every integer $r$ with $1 \leq r \leq n$, let $\mathcal{F}_r \subset [n]^{(r)}$ with $|\mathcal{F}_r|/{n \choose r} \leq O(2^{-k})$ and such that $\mathcal{F}_r$ intersects every copy of $\mathcal{D}_r(k,8k+1)$ in $[n]^{(r)}$. (Such exists, by the remark at the beginning of the proof.)
    Let $\mathcal{F} = \cup_{r=0}^{n}\mathcal{F}_r$. Then $\mathcal{F}$ intersects every copy of $\mathcal{D}_r(k,8k+1)$ in $\mathcal{P}([n])$, for all $r$. Since the vertex-set of every copy of $Q_d$ in $\mathcal{P}([n])$ contains some copy of $\mathcal{D}_r(k,8k+1)$, it follows that $\mathcal{F}$ intersects the vertex-set of every copy of $Q_d$ in $\mathcal{P}([n])$. Clearly, we have
    $$\frac{|\mathcal{F}|}{2^n} = O(2^{-k})= O(2^{-d/8}),$$
    and therefore
    $$\gamma_d = O(2^{-d/8}),$$
    proving the first part of the theorem.\\

    To prove that $\gamma_d < 1/(d+1)$ for all $d \geq 8$, we use a fact from the proof of Theorem \ref{thm:larger}, and a more careful insertion of sparse sets intersecting every daisy, into a sparse set of layers. We note that, since $\pi(\D_r(2,t))$ is decreasing in $r$ and $\ex(n,\D_r(2,t))/{n \choose r}$ is 
    decreasing in $n$, it follows from (\ref{eq:useful-fact}) (in the proof of Theorem \ref{thm:larger}) that $\ex(n,\D_r(2,q+2))/{n \choose r} \geq \prod_{k=1}^{\infty}(1-q^{-k})$ for all $r$, all $n$
    and all prime powers $q$.
    
    Let $t \leq \lfloor d/2 \rfloor +1$ be maximal such that $t-2$ is a prime power, $q$ say. View $\{0,1\}^n$ as $\mathcal{P}([n])$, the power-set of $[n]$. For each $r\leq n$ such that $r$ is a multiple of $\lceil d/2\rceil$, let $\mathcal{F}_r$ be an $r$-uniform hypergraph on the vertex-set $[n]$ that has density at most
    $$1-\prod_{k=1}^{\infty}(1-q^{-k})$$
    and intersects every copy of $\D_r(2,t)$.
    
    Now let $\mathcal{F} = \cup_{r=0}^{n}\mathcal{F}_r$. Consider the vertex-set $\mathcal{V}$ of any copy of $Q_d$ in $Q_n$; $\mathcal{V}$ intersects $d+1$ layers of $\mathcal{P}([n])$, say the layers $r_0,\ldots,r_0+d$, and for each layer $r_0 + 2 \leq i \leq r_0+\lceil d/2 \rceil+1$, the intersection of $\mathcal{V}$ with the $i^{\text{th}}$ layer contains a copy of $\D_r(2,t)$. One of these $\lceil d/2 \rceil$ values of $i$ must be a multiple of $\lceil d/2\rceil$, so $\mathcal{V}$ must intersect $\mathcal{F}$. Therefore $\mathcal{F}$ intersects the vertex-set of every $d$-dimensional subcube of $\{0,1\}^d$, as needed. The family  $\mathcal{F}$ has density at most
    \begin{align*} \frac{1}{\lceil d/2\rceil}\left(1-\prod_{k=1}^{\infty}(1-q^{-k})\right)+o_{n \to \infty}(1),\end{align*}
    so
    $$\frac{g(n,d)}{2^n} \leq \frac{1}{\lceil d/2\rceil}\left(1-\prod_{k=1}^{\infty}(1-q^{-k})\right)+o_{n \to \infty}(1).$$
    Taking the limit of the above as $n \to \infty$ yields
    \begin{equation}\label{eq:master}\gamma_d \leq \frac{1}{\lceil d/2\rceil}\left(1-\prod_{k=1}^{\infty}(1-q^{-k})\right)\end{equation}
    for all $d \geq 6$.

  It suffices (by the inequality (\ref{eq:master})) to check that, for each $d \geq 8$, if $q(d)$ is the largest prime power $q$ such that $q \leq \lfloor d/2\rfloor -1$ then
    \begin{equation}
    \label{eq:suff}
    \frac{1}{\lceil d/2\rceil}\left(1-\prod_{k=1}^{\infty}(1-q(d)^{-k})\right) < \frac{1}{d+1}.\end{equation}
    Write $\beta_q = \prod_{k=1}^{\infty}(1-q^{-k})$ for each prime power $q$; clearly, $\beta_q$ is an increasing function of $q$. For each odd $d \geq 9$, (\ref{eq:suff}) is equivalent to
    $$\beta_{q(d)} > 1/2,$$
and this holds, since for all $d \geq 9$ we have $\beta_{q(d)} \geq \beta_{q(9)} = \beta_3 \approx 0.560 > 1/2$. For each even $d \geq 8$, (\ref{eq:suff}) is equivalent to 
$$\beta_{q(d)} > \tfrac{1}{2}+\tfrac{1}{2d+2},$$
which holds since for all $d \geq 8$ we have
$$\beta_{q(d)} \geq \beta_{q(8)} = \beta_3 \approx 0.560 > \tfrac{5}{9}\geq \tfrac{1}{2}+\tfrac{1}{2d+2},$$
noting that the right-hand side is a decreasing function of $d$.
\end{proof}
    
We now turn to the proof of Theorem \ref{thm:turan}.
\begin{proof}[Proof of Theorem \ref{thm:turan}.]
    We first prove the first part of the theorem. Note that it follows from Theorem \ref{thm:main} that $\ex(n,\D_r)/{n \choose r} \geq \prod_{k=1}^{\infty}(1-2^{-k}) \approx 0.29$ for all $n$ and $r$.  
    
    View $\{0,1\}^n$ as $\mathcal{P}([n])$. For each $r \leq n$ such that $r$ is a multiple of $3$, let $\mathcal{F}_r$ be an $r$-uniform hypergraph on the vertex-set $[n]$ that has density at least
    $$\prod_{k=1}^{\infty}(1-2^{-k}) \approx 0.29$$
    and contains no copy of $\D_r$. Now let $\mathcal{F}=\cup_{r=0}^{n}\mathcal{F}_r$. We clearly have
    $$\frac{|\mathcal{F}|}{2^n} = \tfrac{1}{3}\prod_{k=1}^{\infty}(1-2^{-t}) - o(1) \approx 0.097 - o(1).$$
    We claim that $\mathcal{F}$ contains at most five points of any 4-dimensional subcube. Indeed, let $\mathcal{V}$ denote the vertex-set of a 4-dimensional subcube of $\{0,1\}^n$; then $\mathcal{V}$ intersects five layers of $\mathcal{P}([n])$, say the layers $r_0,r_0+1,r_0+2,r_0+3$ and $r_0+4$. If $r_0$ is congruent to 0 modulo 3, then there are just five vertices of $\mathcal{V}$ at layers of $\mathcal{P}([n])$ congruent to 0 modulo 3 (namely, those in layers $r_0$ or $r_0+3$), so the claim trivially holds. For the same reason, the claim trivially holds if $r_0$ is congruent to 2 modulo 3. If $r_0$ is congruent to 1 modulo 3, then there are just six vertices of $\mathcal{V}$ at layers $r$ of $\mathcal{P}([n])$ congruent to 0 modulo 3, namely those in the $(r_0+2)^{\text{th}}$ layer, and $\mathcal{F}$ cannot contain all six of them, since they form a copy of $\mathcal{D}_{r_0+2}$. This proves the claim, completing the proof of the first part of the theorem.
\vspace{0.5cm}

    We now prove the second part of the theorem. Let $C>0$ be a (large) absolute constant to be chosen later. View $\{0,1\}^n$ as $\mathcal{P}([n])$. For each $r \leq n$ such that $r$ is a multiple of $\lceil C\sqrt{d}\rceil$, let $\mathcal{F}_r$ be an $r$-uniform hypergraph on the vertex-set $[n]$ that has density at least
    $$\prod_{k=1}^{\infty}(1-2^{-k}) \approx 0.29$$
    and contains no copy of $\D_r$. Now let $\mathcal{F}=\cup_{r=0}^{n}\mathcal{F}_r$. We clearly have
    $$\frac{|\mathcal{F}|}{2^n} = \tfrac{1}{\lceil C\sqrt{d}\rceil}\prod_{k=1}^{\infty}(1-2^{-k}) + o_{n \to \infty}(1) \geq c/\sqrt{d},$$
    for some sufficiently small absolute constant $c>0$.
    
    We claim that, provided $C$ is sufficiently large, any $d$-dimensional subcube of $\{0,1\}^n$ contains less than ${d \choose \lfloor d/2\rfloor}$ points of $\mathcal{F}$. To see this, let $\mathcal{V}$ denote the vertex-set of a $d$-dimensional subcube of $\{0,1\}^n$; then $\mathcal{V}$ intersects $d+1$ layers of $\mathcal{P}([n])$, say the layers $r_0,r_0+1,\ldots,r_0+d-1$ and $r_0+d$. Note that, among these layers, there is at most one layer $i$ such that $i$ is a multiple of $\lceil C \sqrt{d}\rceil$ and $|i-(r_0+d/2)| < C\sqrt{d}/2$. For such a layer $i$, since $\mathcal{F} \cap [n]^{(i)}$ is $\mathcal{D}_i$-free, by averaging we have $|\mathcal{V} \cap \mathcal{F} \cap [n]^{(i)}| \leq \tfrac{5}{6}{d \choose i-r_0} \leq \tfrac{5}{6}{d \choose \lfloor d/2\rfloor}$. Let $\mathcal{U}$ denote the union of all the layers $j$ of $\mathcal{P}([n])$ that are multiples of $\lceil C \sqrt{d}\rceil$ but satisfy $|j-(r_0+d/2)| \geq C\sqrt{d}/2$; then since $|\mathcal{V} \cap [n]^{(j)}| < |\mathcal{V} \cap [n]^{(k)}|$ if $k$ is closer to $r_0+d/2$ than $j$ is, we have $|\mathcal{V} \cap \mathcal{U}| < 2^d/(\lfloor C\sqrt{d}/2\rfloor-1) < \tfrac{1}{6}{d \choose \lfloor d/2\rfloor}$ provided $C$ is sufficiently large. Hence, overall we have
    $$|\mathcal{V} \cap \mathcal{F}| < \tfrac{5}{6}{d \choose \lfloor d/2\rfloor}+\tfrac{1}{6}{d \choose \lfloor d/2\rfloor}={d \choose \lfloor d/2\rfloor},$$
as claimed.
\end{proof}
We remark that the second part of the previous theorem is clearly best-possible up to the value of the absolute constant $c$ since, by averaging, any subset $\mathcal{F}\subset \{0,1\}^n$ of density at least ${d \choose \lfloor d/2\rfloor}/2^d$ contains at least ${d \choose \lfloor d/2\rfloor}$ points of some $d$-dimensional subcube.

\section{Concluding remarks}
It would be of interest to determine the right growth
speed of $\gamma_d$, in other words to determine 
$$\lim_{d \to \infty} \frac{\log_2(1/ \gamma_d)}{d},$$
if this limit exists. The quantity  
$\frac{\log_2(1/ \gamma_d)}{d}$ is asymptotically at least $1/8$ (by the proof of Theorem \ref{thm:cube}), and
of course it is always at most 1.

Another open problem, first raised in \cite{blm}, is to determine exactly the Tur\'an density of $\mathcal{D}_3$. It is conjectured in \cite{blm} that $\pi(\mathcal{D}_3)=1/2$. In \cite{blm} there is a construction attaining this asymptotic density, by taking the complement of the Fano plane, blowing up and iterating. 

It would also be interesting to understand what happens for daisies when $s=3$, in particular to determine whether or not
\begin{equation}\label{eq:conj} \lim_{r \to \infty}\pi(\mathcal{D}_r(3,t)) =1-O(1/t^2).\end{equation}
We note that (\ref{eq:conj}) would be best-possible, up to the implicit constant. Indeed, a result of de Caen \cite{caen} states that $\pi(K_t^{(3)}) \leq 1-1/{t-1 \choose 2}$ for all $t \geq 3$. Therefore, for any $\epsilon >0$, any $r \geq 3$ and any $r$-uniform hypergraph $\mathcal{F}$ on the vertex-set $[n]$ with density at least $1-1/{t-1\choose 2}+\epsilon$, if $n$ is sufficiently large depending on $r, t$ and $\epsilon$ then there exists an $(r-3)$-set $S$ such that the link hypergraph $$\{\{i,j,k\} \in ([n]\setminus S)^{(3)}:\ S \cup \{i,j,k\} \in \mathcal{F}\}$$ has density at least $1-1/{t-1 \choose 2}+\epsilon$. So by de Caen's theorem this link hypergraph must contain a $K_t^{(3)}$, and so $\mathcal{F}$ must contain a copy of $\D_r(3,t)$. This shows that $\pi(\D_r(3,t)) \leq 1-1/{t-1 \choose 2}$ for all $r,t \geq 3$.

Finally, in connection with forbidden subposet problems, observe that our construction does not disprove the longstanding conjecture that for the diamond poset, or equivalently the two-dimensional Boolean lattice, the greatest asymptotic density of a diamond-free family of points in the $n$-dimensional Boolean lattice is achieved by taking the union of two middle layers.\\

\textbf{Note.} We are very grateful to Noga Alon for pointing out that if, instead of using the bound of
Earnest for $s$-wise independent families, one uses
the bound of Plotkin (M.~Plotkin, Binary codes with specified minimum distance, {\em IRE Transactions on Information Theory} 6 (1960), 445-450), then one obtains 
that the quantity $\frac{\log_2(1/ \gamma_d)}{d}$ above
is asymptotically at least $1/2$ -- in other words, that we may take the constant $c$ in Theorem \ref{thm:cube} to
be (asymptotically) $1/\sqrt{2}$. We are also grateful to Robert Johnson, and independently Noga Alon, for the following attractive observations. If one views the $r$-daisy as an $(r-2)$-set
connected to the six 2-sets that form the graph $K_4$,
then the proof of Theorem \ref{thm:main} goes through verbatim if we replace the graph $G=K_4$ by any graph $G$ having chromatic number at least 4. In contrast, if $G$ has chromatic number at most 3 then the corresponding Tur\'an densities have to tend to zero. Indeed, this is clear if
$G$ is a triangle (as then our family of $r$-sets would have to contain at most two $r$-sets in any given $(r+1)$-set), and in general it follows by `blowing up' this argument, via some averaging and known facts about
Tur\'an densities for 3-partite 3-graphs.\\

\textbf{Acknowledgement.} We are very grateful to Boris Bukh for pointing out to us the connection between our results and the conjecture of his and of Griggs and Lu on forbidden subposets.

\Addresses
\end{document}